\documentclass[12pt]{amsart}
\usepackage[colorlinks=true,citecolor=magenta,linkcolor=magenta]{hyperref}
\usepackage{amssymb,verbatim,amscd,amsmath,graphicx}
\usepackage{enumerate}
\usepackage{graphicx}
\usepackage{caption}
\usepackage{subcaption}
\usepackage{pdfsync}
\usepackage{color,colortbl}
\usepackage{fullpage}
\usepackage{bbm}
\usepackage{comment}

\numberwithin{equation}{section}
\hyphenation{semi-stable}

\newtheorem{theorem}{Theorem}[section]
\newtheorem{lemma}[theorem]{Lemma}

\newtheorem{corollary}[theorem]{Corollary}

\theoremstyle{definition}
\newtheorem{definition}[theorem]{Definition}

\newtheorem{def-prop}[theorem]{Definition-Proposition}

\newtheorem{remark}[theorem]{Remark}
\newtheorem{example}[theorem]{Example}

\newtheorem*{acknowledgement}{Acknowledgements}

\newtheorem{question}[theorem]{Question}

\DeclareMathOperator{\Tor}{Tor}

\DeclareMathOperator{\reg}{reg}
\DeclareMathOperator{\depth}{depth}

\DeclareMathOperator{\NP}{NP}

\newcommand{\ZZ}{{\mathbb Z}}
\newcommand{\NN}{{\mathbb N}}
\newcommand{\QQ}{{\mathbb Q}}
\newcommand{\RR}{{\mathbb R}}

\newcommand{\kk}{{\mathbbm k}}

\def\mm{{\frak m}}

\def\a{{\bf a}}

\def\e{{\bf e}}

\def\y{{\bf y}}

\def\1{{\bf 1}}
\def\0{{\bf 0}}


\begin{document}
	
	\title{Integral closures of powers of sums of ideals}

	\author[Banerjee]{Arindam Banerjee}
	\address{Department of Mathematics, Indian Institute of Technology, Kharagpur, West Bengal, India}
	\email{123.arindam@gmail.com}
	
	\author[H\`a]{T\`ai Huy H\`a}
	\address{Mathematics Department, Tulane University, 6823 St. Charles Avenue, New Orleans, LA 70118, USA}
	\email{tha@tulane.edu}
\thanks{Corresponding author: T\`ai Huy H\`a, tha@tulane.edu.}
	
	\subjclass[2020]{13C13, 90C05, 13D07}
    \keywords{Integral closure, monomial ideal, sum of ideals, power of ideal, rational power, depth, regularity}
	
	\begin{abstract}
	Let $\kk$ be a field, let $A$ and $B$ be polynomial rings over $\kk$, and let $S= A \otimes_\kk B$. Let $I \subseteq A$ and $J \subseteq B$ be monomial ideals. We establish a binomial expansion for rational powers of $I+J \subseteq S$ in terms of those of $I$ and $J$. Particularly, for $u \in \QQ_+$, we prove that
	$$(I+J)_u = \sum_{0 \le \omega \le u, \ \omega \in \QQ} I_\omega J_{u-\omega},$$
	and that the sum on the right hand side is a finite sum. This finite sum can be made more precise using jumping numbers of rational powers of $I$ and $J$. We further give sufficient conditions for this formula to hold for the integral closures of powers of $I+J$ in terms of those of $I$ and $J$.
	Under these conditions, we provide explicit formulas for the depth and regularity of $\overline{(I+J)^k}$ in terms of those of powers of $I$ and $J$. 
	\end{abstract}
	
	\maketitle
	

\section{Introduction} \label{sec.intro}

Let $\kk$ be a field. Let $A$ and $B$ be polynomial rings over $\kk$, and let $I \subseteq A$ and $J \subseteq B$ be ideals. A binomial expansion for symbolic powers of the sum $I+J$ in $S = A \otimes_\kk B$ was given by the second author and various co-authors in \cite{HJKN2022, HNTT2020}. Particularly, it was shown that, for any positive integer $k \in \NN$,
\begin{align}
	(I+J)^{(k)} = \sum_{\ell=0}^k I^{(\ell)} \cdot J^{(k-\ell)}. \label{eq.bin}
\end{align}
This formula was proved for symbolic powers defined using minimal primes in \cite{HNTT2020}. It was established for symbolic powers defined using all associated primes and, more generally, for saturated powers recently in \cite{HJKN2022}. The formula has been quite well received and seen many applications since its discovery (cf. \cite{BHJT, EH2020, ERT2020, KKS2021, LS2021, MV2021, OR2019, SF2020, SW2019, W2018}).

It is not known if the integral closures of powers of an ideal could be realized as saturated powers. Thus, a natural question arises: \emph{does a similar binomial expansion exist for the integral closures of powers of sums of ideals?} We shall address this question in this paper.

Simple examples exist to illustrate that the binomial expansion (\ref{eq.bin}) does not hold in general when symbolic powers are replaced by the integral closures of powers. For instance, by taking $I = (x^2) \subseteq \kk[x] = A$ and $J = (y^2) \subseteq B = \kk[y]$, it is easy to see that $xy \in \overline{I+J} \subseteq \kk[x,y]$, while $\overline{I} + \overline{J} = (x^2,y^2)$ does not contain $xy$. Particularly,
$$\overline{I+J} \not= \overline{I} + \overline{J}.$$
On the other hand, a recent result of Mau and Trung \cite[Theorem 2.1]{MT2021} showed that if $I \subseteq A$ is a \emph{normally torsion-free} squarefree monomial ideal and $J \subseteq B$ is an arbitrary monomial ideal then, for any $k \in \NN$,
\begin{align}
	\overline{(I+J)^k} = \sum_{\ell=0}^k \overline{I^\ell} \cdot \overline{J^{k-\ell}}. \label{eq.binIC}
\end{align}
(Note that if both $I$ and $J$ are normally torsion-free squarefree monomial ideals, then $I+J$ is normally torsion-free by
\cite[Corollary 5.6]{SVV}; see also \cite[Corollary 5.3 and Theorem 5.4]{SVV} for
more information about the addition of normally torsion-free ideals. Thus, in this case, 
equality (\ref{eq.binIC}) becomes the usual expansion of ordinary powers.)
It is therefore desirable to establish new binomial expansions that work for all monomial ideals, and to identify classes of monomial ideals for which the binomial expansion (\ref{eq.binIC}) holds.

To search for a binomial expansion that holds for all monomial ideals, our solution is to focus instead on \emph{rational powers}. Let $u = \frac{p}{q} \in \QQ_+$ be any positive rational number, with $p,q \in \NN$ and $q \not= 0$. Following \cite[Definition 10.5.1]{SH2006}, the \emph{$u$-th rational power} of an ideal $I$ in a domain $A$ is defined to be
$$I_u = \{x \in A ~\big|~ x^q \in \overline{I^p}\}.$$
This definition of $I_u$ does not depend on the particular presentation $u = \frac{p}{q}$. Obviously, if $u$ is a positive integer then $I_u = \overline{I^u}$ is the integral closure of $I^u$. Rational powers have been extended to \emph{real powers} in a recent work of Dongre \emph{et. al.} \cite{D+2021}.

Our first main theorem reads as follows.

\medskip

\noindent\textbf{Theorem \ref{thm.binRAT}.} Let $I \subseteq A$ and $J \subseteq B$ be monomial ideals. Let $u \in \QQ$ be any positive rational number. Then,
$$(I+J)_u = \sum_{0 \le \omega \le u, \ \omega \in \QQ} I_\omega \cdot J_{u-\omega},$$
and the sum on the right hand side is a finite sum.

\medskip

Theorem \ref{thm.binRAT} particularly exhibits a reason why we should not expect the binomial expansion (\ref{eq.binIC}) to hold for all monomial ideals in general --- there are missing terms in the right hand side of (\ref{eq.binIC}) when $u=k$ is an integer.

Our methods are based on \cite{DTWWV, HT2019}, where the membership in the integral closure $\overline{I^k}$ was characterized in terms of the optimal solution to linear programming problems associated to $I$. Specifically, let $M$ be the matrix whose columns are exponent vectors of the (unique set of) minimal monomial generators of $I$ and assume that $M$ is of size $n \times m$. For $\a \in \ZZ_{\ge 0}^n$, consider the following linear programming problem:
\begin{align*}
	{\rm (\star)} & \left\{\begin{array}{l} \text{maximize } \1^{m} \cdot \y, \\ \text{subject to } M \cdot \y \le \a, \y \in \RR^{m}_{\ge 0}. \end{array}\right.
\end{align*}
Let $\nu^*_\a(I)$ be the optimal solution to ($\star$). It was shown in \cite[Proposition 1.1]{HT2019} (see also \cite[Proposition 3.5 and Remark 3.6]{DTWWV}) that $x^\a \in \overline{I^k}$ if and only if $\nu^*_\a(I) \ge k$. We provide a similar criterion for the membership of a rational power $I_u$; see Lemma \ref{lem.membershipRAT}.

The finite sum on the right hand side of Theorem \ref{thm.binRAT} can be made more precise in terms of jumping numbers of $I$ and $J$. The jumping numbers of a monomial ideal were defined in \cite{D+2021} as a means to identify different real powers of the ideal. It turns out that these jumping numbers are rational. For a fixed $u \in \QQ_+$, a rational number $\theta \in [0,u]$ is called a \emph{jumping number on the interval $[0,u]$} of a monomial ideal $I$ if either $\theta = u$ or $I_\theta \not= I_{\theta'}$ for all $\theta' > \theta$ (see also \cite[Corollary 5.7]{D+2021}).

Thanks to an anonymous referee's observation and suggestion, we prove the following result.

\medskip

\noindent\textbf{Theorem \ref{thm.jump}.} Let $I \subseteq A$ and $J \subseteq B$ be monomial ideals. Let $u \in \QQ$ be any positive rational number. Then,
	$$(I+J)_u = \sum_{\substack{\omega \text{ is a jumping number} \\ \text{of $I$ on } [0,u]}} I_\omega \cdot J_{u - \omega} = \sum_{\substack{\theta \text{ is a jumping number} \\ \text{of $J$ on } [0,u]}} I_{u-\theta} \cdot J_\theta.$$

\medskip

Theorem \ref{thm.jump} is achieved by observing that distinct terms on the right hand side of the binomial expansion established in Theorem \ref{thm.binRAT} are exactly those with different rational powers of $I$ or of $J$.

As shown in Theorems \ref{thm.binRAT} and \ref{thm.jump}, we cannot expect the binomial expansion (\ref{eq.binIC}) to hold for all monomial ideals. Corollary \ref{thm.BinomialExpansion} is a special case of Theorem \ref{thm.binRAT} and gives an improvement of the aforementioned result of Mau and Trung \cite[Theorem 2.1]{MT2021}. 
Making use of the jumping numbers of powers of $I$ and $J$, Corollary \ref{cor.bin_jump} is a consequence of Theorem \ref{thm.jump} and presents another sufficient condition for the binomial expansion (\ref{eq.binIC}) to hold. 

\medskip 

\noindent\textbf{Corollaries \ref{thm.BinomialExpansion} and \ref{cor.bin_jump}.} Let $I \subseteq A$ and $J \subseteq B$ be monomial ideals, and let $k \in \NN$. Suppose that at least one of the following conditions holds:
\begin{enumerate}
	\item for every nonnegative integral vector $\alpha$, $\nu^*_\alpha(I) \in \ZZ$; or
	\item the jumping numbers on $[0,k]$ of either $I$ or $J$ are all integers.
\end{enumerate}
Then, we have
 $$\overline{(I+J)^k} = \sum_{\ell = 0}^k \overline{I^\ell} \cdot \overline{J^{k-\ell}}.$$
 
\medskip

Having a binomial expansion as in (\ref{eq.binIC}) for $\overline{(I+J)^k}$ allows us to estimate important algebraic invariant, such as the depth and the regularity, of $S/\overline{(I+J)^k}$. Particularly, we exhibit explicit formulas for the depth and regularity of $S/\overline{(I+J)^k}$ in terms of those of the integral closures of powers of $I$ and $J$, under the sufficient conditions in Corollaries \ref{thm.BinomialExpansion} and \ref{cor.bin_jump}. Such formulas for the ordinary and symbolic powers of $(I+J)$ were given in \cite{HNTT2020, HTT2016, NV2019}. A formula for the integral closure of powers of $(I+J)$ would be desirable, for instance, as stated in \cite{MV2021}.

\medskip

\noindent\textbf{Theorem \ref{thm.DepthReg}.} Let $I \subseteq A$ and $J \subseteq B$ be monomial ideals, and let $k \in \NN$. Suppose that at least one of the following conditions holds:
\begin{enumerate}
	\item for every nonnegative integral vector $\alpha$, $\nu^*_\alpha(I) \in \ZZ$; or
	\item the jumping numbers on $[0,k]$ of either $I$ or $J$ are all integers.
\end{enumerate}
Then, we have
\begin{enumerate}
	\item[(1)] $\depth S/\overline{(I+J)^k} = $ \newline
	\hspace*{3ex} ${\displaystyle \min_{\substack{i \in [1,k-1] \\ j \in [1,k]}} \{\depth A/\overline{I^{k-i}} + \depth B/\overline{J^i} + 1, \depth A/\overline{I^{k-j+1}} + \depth B/\overline{J^j}\}}$,
	\item[(2)] $\reg S/\overline{(I+J)^k} = $ \newline
	\hspace*{3ex} ${\displaystyle \max_{\substack{i \in [1,k-1] \\ j \in [1,k]}} \{\reg A/\overline{I^{k-i}} + \reg B/\overline{J^i} + 1, \reg A/\overline{I^{k-j+1}} + \reg B/\overline{J^j}\}}$.
\end{enumerate}

\medskip

Our approach to proving Theorem \ref{thm.DepthReg} is similar to that of \cite[Theorems 4.2 and 5.3]{HNTT2020}. Particularly, by setting
$$P_{k,t} = \overline{I^k} \cdot \overline{J^0} + \overline{I^{k-1}} \cdot \overline{J} + \dots + \overline{I^{k-t}} \cdot \overline{J^t},$$
for $0 \le t \le k$, and observing that
\begin{enumerate}
\item[(a)] $P_{k,t} = P_{k,t-1} + \overline{I^{k-t}} \cdot \overline{J^t}$, and
\item[(b)] $P_{k,t-1} \cap \overline{I^{k-t}} \cdot \overline{J^t} = \overline{I^{k-t+1}} \cdot \overline{J^t},$
\end{enumerate}
one direction of the inequality in Theorem \ref{thm.DepthReg} follows from standard short exact sequences:
\begin{align}
0 \longrightarrow R\big/P_{k,t-1} \cap \overline{I^{k-t}} \rightarrow R/P_{k,t-1} \oplus R\big/\overline{I^{k-t}} \cdot \overline{J^t} \longrightarrow R/P_{k,t} \longrightarrow 0. \label{eq.sesSplitting}
\end{align}
To establish the reverse inequality, we show that the decomposition $P_{k,t} = P_{k,t-1} + \overline{I^{k-t}} \cdot \overline{J^t}$ is a \emph{Betti splitting}, a notion defined by Francisco, H\`a and Van Tuyl \cite{FHVT2009} to guarantee that the inequality between Betti numbers that result from the exact sequences in (\ref{eq.sesSplitting}) are in fact equality. To accomplish this last step, we exhibit that the filtration $\{\overline{I^k}\}_{k \in \NN}$ and $\{\overline{J^k}\}_{k \in \NN}$ are \emph{Tor-vanishing}, in the sense of Nguyen and Vu \cite{NV2019}.

 \begin{acknowledgement} The authors thank Rafael H. Villarreal for pointing us to their paper \cite{DTWWV}. The authors thank an anonymous referee for suggesting to connect our results to jumping numbers defined in \cite{D+2021} and, thus, improving some of our statements. The authors thank an anonymous referee and Jonathan Monta\~no for providing us with Example \ref{ex.notNess}. The first author acknowledges supports from DST INSPIRE Faculty Fellowship and CPDA of IIT Kharagpur. The second author is partially supported by Louisiana Board of Regents and the Simons Foundation.
\end{acknowledgement}





\section{Binomial expansion of integral closures of powers} \label{sec.bin}

Throughout the paper, let $A = \kk[X_1, \dots, X_r]$, $B = \kk[Y_1, \dots, Y_s]$, and $S = A \otimes_\kk B$. Let $I \subseteq A$ and $J \subseteq B$ be monomial ideals. By abusing notation, we shall write $I$ and $J$ also for their extensions in $S$.

Observe that the extension of $\overline{I}$ in $S$ is the same as the integral closure of $IS$ in $S$, i.e., $\overline{I}S = \overline{IS}$. This follows, for instance, from  \cite[Corollary 19.5.2]{SH2006}, since $S$ is a normal extension of $A$. Therefore, also by abusing notation, we shall write $\overline{I}$ and $\overline{J}$ to refer to the integral closures of $I$ and $J$, considered both as ideals in $A$ and $B$, respectively, and as their extensions in $S$.

Recall that for an ideal $I \subseteq A$ and a positive rational number $u = \frac{p}{q}$, with $p, q \in \NN$ and $q \not= 0$, the \emph{$u$-th rational power} of $I$ is
$$I_u= \{x \in A ~\big|~ x^q \in \overline{I^p}\}.$$
For monomial ideals, rational powers were extended to real powers in \cite{D+2021}. Particularly, for a monomial ideal $I \subseteq A$ and $u \in \RR_{\geq 0}$, the $u$-th \emph{real power} of $I$ is defined to be
$$I_u=\{x^\a \in A ~\big|~ \a\in u.\NP(I)\cap \ZZ^r_{\ge 0}\},$$
where $\NP(I)$ is the Newton polyhedron of $I$ and, for $\a = (a_1, \dots, a_r) \in \ZZ^r_{\ge 0}$, $x^\a$ represents the monomial $X_1^{a_1} \cdots X_r^{a_r}$ in $A$. 

The following membership criterion for rational powers is similar to that of \cite[Proposition 1.1]{HT2019} and \cite[Proposition 3.5 and Remark 3.6]{DTWWV}.

\begin{lemma}
	\label{lem.membershipRAT}
	Let $I \subseteq A$ be any monomial ideal and let $\a \in \ZZ_{\ge 0}^r$. Let $\frac{p}{q}$ be any rational number, where $p,q \in \NN$ and $q \not= 0$. Then, $x^\a \in I_{\frac{p}{q}}$ if and only if $\nu^*_{\a}(I) \ge \frac{p}{q}$.
\end{lemma}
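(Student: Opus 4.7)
The plan is to reduce this to the integer case covered by \cite[Proposition 1.1]{HT2019} via a scaling argument on the linear programming problem that defines $\nu^*_\a(I)$. By definition of the rational power, $x^\a \in I_{p/q}$ if and only if $(x^\a)^q = x^{q\a} \in \overline{I^p}$, and the cited criterion says that $x^{q\a} \in \overline{I^p}$ if and only if $\nu^*_{q\a}(I) \ge p$. Thus it suffices to show the homogeneity relation
$$\nu^*_{q\a}(I) = q \cdot \nu^*_\a(I),$$
because dividing the inequality $\nu^*_{q\a}(I) \ge p$ by $q$ produces exactly $\nu^*_\a(I) \ge p/q$.

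Next, I would verify the scaling identity directly from the primal LP (a). If $\y$ is feasible for the constraint $M\y \le \a$, then $q\y \ge 0$ and $M(q\y) = qM\y \le q\a$, so $q\y$ is feasible for the $\a' = q\a$ problem with objective value $\1^m \cdot (q\y) = q(\1^m \cdot \y)$; hence $\nu^*_{q\a}(I) \ge q\nu^*_\a(I)$. Conversely, if $\y'$ is feasible for $M\y' \le q\a$, then $\y'/q \ge 0$ and $M(\y'/q) \le \a$, yielding a feasible point for the $\a$-problem of value $(\1^m \cdot \y')/q$; hence $q\nu^*_\a(I) \ge \nu^*_{q\a}(I)$. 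Combining the two inequalities gives the desired identity.

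Putting the pieces together chains the biconditionals: $x^\a \in I_{p/q} \Longleftrightarrow x^{q\a} \in \overline{I^p} \Longleftrightarrow \nu^*_{q\a}(I) \ge p \Longleftrightarrow q\nu^*_\a(I) \ge p \Longleftrightarrow \nu^*_\a(I) \ge p/q$, which is exactly the statement of the lemma. As a small bonus, this chain of equivalences also reconfirms that the membership in $I_{p/q}$ is independent of the chosen representation $p/q$, since the rightmost condition depends only on the rational value $p/q$.

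There is no serious obstacle here; the only thing that requires care is the LP homogeneity step, but this is a routine rescaling of primal feasibility, and one could equivalently argue it from the dual LP (b) by rescaling $\z$ (the feasible region $M^{\text T}\z \ge \1^m$ is unchanged, while the objective $\a \cdot \z$ scales linearly with $\a$). Thus the crux is simply linking the definition of rational powers to the already-established integer criterion.
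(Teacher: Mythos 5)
Your proof is correct and follows essentially the same route as the paper: reduce via the definition of rational powers and \cite[Proposition 1.1]{HT2019} to the statement $\nu^*_{q\a}(I) \ge p$, then rescale to $\nu^*_\a(I) \ge p/q$. The only difference is that you explicitly verify the LP homogeneity $\nu^*_{q\a}(I) = q\,\nu^*_\a(I)$, which the paper treats as immediate with the phrase ``or, equivalently.''
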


\begin{proof} By definition, $x^\a \in I_{\frac{p}{q}}$ if and only if $\left(x^\a\right)^q \in \overline{I^p}$. By \cite[Proposition 1.1]{HT2019}, this is the case if and only if $\nu^*_{q\cdot \a}(I) \ge p$ or, equivalently, $\nu^*_\a(I) \ge \frac{p}{q}$.
\end{proof}

Observe that if $I$ is a monomial ideal then $\overline{I^p}$ is a monomial ideal. This, particularly, implies that the $u$-th rational power $I_u$, for $u = \frac{p}{q}$, is also a monomial ideal. Our first main result is stated as follows.

\begin{theorem}
	\label{thm.binRAT}
	Let $I \subseteq A$ and $J \subseteq B$ be monomial ideal. Let $u \in \QQ$ be any positive rational number. Then,
	$$(I+J)_u = \sum_{\substack{0\le \omega \le u, \ \omega \in \QQ}} I_\omega \cdot J_{u-\omega},$$
	and the sum on the right hand side is a finite sum.
\end{theorem}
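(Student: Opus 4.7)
The whole argument rests on a single observation about the linear programming data of $I+J$, combined with the membership criterion of Lemma~\ref{lem.membershipRAT}. Let $x^{\m_1}, \ldots, x^{\m_p}$ be the minimal monomial generators of $I \subseteq A$ with matrix $M_I \in \NN^{r \times p}$, and $y^{\n_1}, \ldots, y^{\n_q}$ be those of $J \subseteq B$ with matrix $M_J \in \NN^{s \times q}$. Viewed in $S$, the minimal generators of $I+J$ have exponent vectors $(\m_i, \0)$ and $(\0, \n_j)$, so the associated constraint matrix is the block-diagonal matrix
$$M_{I+J} = \begin{pmatrix} M_I & 0 \\ 0 & M_J \end{pmatrix}.$$
For any monomial $x^\a y^\b \in S$, the LP problem defining $\nu^*_{(\a,\b)}(I+J)$ splits cleanly into an independent LP in the $I$-coordinates (with RHS $\a$) and an LP in the $J$-coordinates (with RHS $\b$). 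Thus I expect
$$\nu^*_{(\a,\b)}(I+J) = \nu^*_{\a}(I) + \nu^*_{\b}(J),$$
which is the algebraic engine of the entire proof.

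\textbf{The two inclusions.} Since all ideals in sight are monomial, it suffices to work with monomials $x^\a y^\b \in S$. For $\supseteq$, if $x^\a y^\b \in I_\omega \cdot J_{u-\omega}$ with $\omega \in \QQ \cap [0,u]$, then $x^\a \in I_\omega$ and $y^\b \in J_{u-\omega}$, so by Lemma~\ref{lem.membershipRAT} we have $\nu^*_\a(I) \ge \omega$ and $\nu^*_\b(J) \ge u-\omega$; adding and invoking the decoupling identity gives $\nu^*_{(\a,\b)}(I+J) \ge u$, i.e.\ $x^\a y^\b \in (I+J)_u$. For $\subseteq$, suppose $x^\a y^\b \in (I+J)_u$. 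Set $\omega := \min\{\nu^*_\a(I), u\}$. Because $M_I$ and $\a$ have integer entries, $\nu^*_\a(I)$ is rational (being the optimum of an LP at a vertex determined by Cramer's rule), so $\omega \in \QQ \cap [0,u]$. By construction $\nu^*_\a(I) \ge \omega$, hence $x^\a \in I_\omega$; and the decoupling identity combined with $\nu^*_{(\a,\b)}(I+J) \ge u$ gives $\nu^*_\b(J) \ge u - \omega$, hence $y^\b \in J_{u-\omega}$. (In the edge case $\omega = 0$ or $\omega = u$, one uses that $I_0 = A$ and $J_0 = B$.) Thus $x^\a y^\b \in I_\omega \cdot J_{u-\omega}$.

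\textbf{Finiteness of the sum.} Every optimal LP value $\nu^*_\a(I)$ is attained at a vertex of the rational polyhedron $\{\y \ge 0 : M_I \y \le \a\}$, whose coordinates have denominators dividing $\det$ of some square submatrix of $M_I$. Since these determinants are bounded independently of $\a$, there exists a positive integer $N_I$ with $\{\nu^*_\a(I) : \a \in \NN^r\} \subseteq \tfrac{1}{N_I}\ZZ_{\ge 0}$, and similarly an $N_J$ for $J$. Consequently, as $\omega$ varies in $\QQ \cap [0,u]$, the ideal $I_\omega$ only jumps at values in $\tfrac{1}{N_I}\ZZ_{\ge 0}$, and $J_{u-\omega}$ only jumps at values where $u-\omega \in \tfrac{1}{N_J}\ZZ_{\ge 0}$. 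Each of these gives only finitely many jumps inside $[0,u]$, so the product $I_\omega \cdot J_{u-\omega}$ takes only finitely many distinct values, proving the sum is finite.

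\textbf{Main obstacle.} The technically delicate step is the decoupling identity $\nu^*_{(\a,\b)}(I+J) = \nu^*_\a(I) + \nu^*_\b(J)$; although intuitively obvious from the block structure, it must be verified that both the primal (maximization) problem and its feasible region split as a direct sum, with no coupling through the $\y$ variables. Once this is secured, the remainder of the proof is essentially a bookkeeping exercise in applying Lemma~\ref{lem.membershipRAT}. The finiteness argument, while elementary, is the other place where care is needed to extract a canonical finite index set; the bound via vertices of integer polyhedra gives a clean and uniform way to do so.
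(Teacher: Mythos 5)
Your proof is correct and follows essentially the same route as the paper: the crucial step in both is that the exponent matrix of $I+J$ is block diagonal, so the linear program computing $\nu^*_{(\alpha,\beta)}(I+J)$ decouples and its optimum equals $\nu^*_\alpha(I)+\nu^*_\beta(J)$, after which Lemma~\ref{lem.membershipRAT} yields the hard inclusion. The only cosmetic differences are that the paper proves the easy inclusion directly from $\overline{I^k}\cdot\overline{J^{p-k}}\subseteq\overline{(I+J)^p}$ and obtains finiteness by citing \cite[Proposition 10.5.5]{SH2006} rather than via your vertex--denominator bound; your choice $\omega=\min\{\nu^*_\alpha(I),u\}$ handles the boundary case slightly more carefully than the paper's argument.
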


\begin{proof} Fix a rational number $0 \le \omega \le u$. By using the same denominator, without loss of generality, we may assume that $u = \frac{p}{q}$, and $\omega =  \frac{k}{q}$ for some $0 \le k \le p$. Consider arbitrary monomials $x^\a \in I_\omega$ and $x^\e \in J_{u-\omega}.$ By definition, we have
	$$\left(x^\a\right)^q \in \overline{I^k} \text{ and } \left(x^\e\right)^q \in \overline{J^{p-k}}.$$
	It follows that $\left(x^{\a+\e}\right)^q \in \overline{I^k} \cdot \overline{J^{p-k}} \subseteq \overline{(I+J)^p}$, where the later inclusion is a consequence of \cite[Proposition 10.5.2]{SH2006}. Particularly, $x^{\a+\e} \in (I+J)_{\frac{p}{q}}$. This proves that $I_\omega J_{u-\omega} \subseteq (I+J)_u$. Since this holds for all rational numbers $0 \le \omega \le u$, we obtain the inclusion
	$$(I+J)_u \supseteq \sum_{\substack{0\le \omega \le u, \ \omega \in \QQ}} I_\omega \cdot J_{u-\omega}.$$
	
	We shall proceed to get the reverse inclusion. As observed before, $(I+J)_u$ is a monomial ideal. Thus, it suffices to show that all monomials in $(I+J)_u$ belong to $\sum\limits_{0 \le \omega \le u, \ \omega \in \QQ} I_\omega \cdot J_{u-\omega}$.
	
	Let $m_1$ and $m_2$ be the number of minimal generators of $I$ and $J$, respectively. Let $M_1$ be the $r \times m_1$ exponent matrix of $I$ and let $M_2$ be the $s \times m_2$ exponent matrix of $J$. Set $n = r+s$ and $m = m_1 + m_2$. Relabel the variables of $S$ to be $x_1, \dots, x_n$ (corresponding to $X_1, \dots, X_r, Y_1, \dots, Y_s$). Define $M$ to be the following $n \times m$ matrix:
	$$M = \left( \begin{array}{c|c} M_1 & \ \0 \\ \hline  \0 & M_2\end{array}\right).$$
	
	Consider any monomial $x^\a \in (I+J)_u$. We have $\left(x^\a\right)^q \in \overline{(I+J)^p}$; that is, $x^{q\cdot \a} \in \overline{(I+J)^p}.$
	By \cite[Proposition 1.1]{HT2019}, we get $\nu^*_{q \cdot \a}(M) \ge p$ or, equivalently, $\nu^*_\a(M) \ge u$. This condition states that the optimal solution to the following linear programming problem is at least $u$:
	\begin{align}
		\label{eq.LP}
		\left\{\begin{array}{l} \text{maximize } \1^m \cdot \y, \\
		 \text{subject to } M \cdot \y \le \a, \ \y \in \RR^m_{\ge 0}. 
		\end{array} \right.
	\end{align}
	
	Suppose that $\y = (y_1, \dots, y_m) \in \RR^m_{\ge 0}$ is a vector that gives the optimal solution to (\ref{eq.LP}). Then, we have
	$$M \cdot \y \le \a \text{ and } \1^m \cdot \y \ge u.$$
	Write $\a = (\alpha, \0) + (\0, \beta)$, where $\alpha \in \ZZ_{\ge 0}^r$ and $\beta \in \ZZ_{\ge 0}^s$, and $\y = (\y_1, \0) + (\0, \y_2)$, where $\y_1 \in \RR^{m_1}_{\ge 0}$ and $\y_2 \in \RR^{m_2}_{\ge 0}$. Observe that, since $M$ is a block matrix, the optimization problem (\ref{eq.LP}) is equivalent to the following two problems:
	$$ (\dagger) \left\{\begin{array}{l} \text{maximize } \1^{m_1} \cdot \y_1, \\ \text{subject to } M_1 \cdot \y_1 \le \alpha, \y_1 \in \RR^{m_1}_{\ge 0} \end{array}\right.
	\quad \text{ and } \quad (\sharp) \left\{\begin{array}{l} \text{maximize } \1^{m_2} \cdot \y_2, \\ \text{subject to } M_2\cdot \y_2 \le \beta, \y_2 \in \RR^{m_2}_{\ge 0}. \end{array}\right. $$
	
	Let $\omega$ be the optimal solution to ($\dagger$). It can be seen that the system $M_1 \cdot \y_1 \le \alpha$ (here, $\a = (\alpha, \0) + (\0, \beta)$ for $\alpha \in \ZZ_{\ge 0}^r$ and $\beta \in \ZZ_{\ge 0}^s$) consists of linear inequalities with rational coefficients, so its feasible set has rational vertices. This implies that $\omega = \nu^*_\alpha(M_1) \in \QQ$. We also have $\nu^*_\beta(M_2) = \nu^*_\a(M) - \nu^*_\alpha(M_1) \ge u-\omega \in \QQ$. It then follows from Lemma \ref{lem.membershipRAT} that $x^\alpha \in I_\omega$ and $x^\beta \in J_{u-\omega}$. If $\omega \ge u$ then we get $x^\a \in I_\omega S \subseteq I_u S$. Of $\omega \le u$ then we have $x^\a = x^\alpha \cdot x^\beta \in I_\omega J_{u-\omega}$. This is true for any monomial $x^\a \in (I+J)_u$. Hence, we obtain the inclusion
	$$(I+J)_u \subseteq \sum_{\substack{0\le \omega \le u, \ \omega \in \QQ}} I_\omega \cdot J_{u-\omega},$$
	and the first assertion is established.
	
	We continue to prove the second assertion. Observe that, by \cite[Proposition 10.5.5]{SH2006}, there exist  integers $e$ and $f$ such that every rational power $I_\omega$ and $J_\omega$ is of the form $I_{\frac{p}{e}}$ and $J_{\frac{q}{f}}$ for some $p,q \in \NN$. Particularly, it was shown in \cite[Proposition 10.5.5]{SH2006} that
	$$I_u = I_{\frac{\lceil ue\rceil}{e}} =  I_{\frac{n}{e}} \text{ and } J_u = J_{\frac{\lceil uf\rceil}{f}} = J_{\frac{m}{f}},$$
	where $n = \lceil ue\rceil$, and $m = \lceil uf\rceil$.
	
	It can be seen that $\frac{n-1}{e} < u$ and $\frac{m-1}{f} < u$. Moreover, by \cite[Proposition 10.5.2]{SH2006}, $I_\alpha \supseteq I_\beta$ if $\alpha \le \beta$. Therefore, it follows that $\{I_\omega ~\big|~ 0 \le \omega \le u\}$ coincides with $\{I_{\frac{p}{e}} ~\big|~ 0 \le p \le n\}$ and $\{J_{u-\omega} ~\big|~ 0 \le \omega \le u\}$ coincides with $\{J_{\frac{q}{e}} ~\big|~ 0 \le q \le m\}$. Hence, in the sum $\sum_{0 \le \omega \le u, \ \omega \in \QQ}I_\omega J_{u-\omega}$, only finitely many terms appear. This completes the second assertion of the theorem.
\end{proof}


\begin{example}
	\label{ex.notBE}
	Consider $I = (x^2) \subseteq \kk[x]$ and $J = (y^2, yz) \subseteq \kk[y,z]$. By \cite[Theorem 10.3.5 and Proposition 10.5.5]{SH2006}, the constants $e$ and $f$ as in the proof of Theorem \ref{thm.binRAT} for rational powers of $I$ and $J$ can be taken to be $e = f = 2$. Thus, Theorem \ref{thm.binRAT} gives the following binomial expansion
	$$\overline{(I+J)^2} = I_2 + I_{\frac{3}{2}} \cdot J_{\frac{1}{2}} + I_1 \cdot J_1 + I_{\frac{1}{2}} \cdot J_{\frac{3}{2}} + J_2 = \overline{I^2} + I_{\frac{3}{2}} \cdot J_{\frac{1}{2}} + \overline{I} \cdot \overline{J} + I_{\frac{1}{2}} \cdot J_{\frac{3}{2}} + \overline{J^2}.$$
	
	It can be verified directly that
	$$\overline{(I+J)^2} = (y^2z^2, y^3z, xy^2z, x^2yz, y^4, xy^3, x^2y^2, x^3y, x^4),$$
	while
	$$\overline{I^2} + \overline{I} \cdot \overline{J} + \overline{J^2} = (x^4, y^2z^2, y^3z, y^4, x^2yz, x^2y^2).$$
	Clearly, $\overline{(I+J)^2} \not= \overline{I^2} + \overline{IJ} + \overline{J^2}.$
	
	On the other hand, it is easy to see that $x^2 \in I \subseteq \overline{I}$ and $y^6 \in I^3 \subseteq \overline{I^3}$, so $x \in I_{\frac{1}{2}}$ and $y^3 \in J_{\frac{3}{2}}$. Thus, $xy^3 \in I_{\frac{1}{2}} \cdot J_{\frac{3}{2}}.$ Similarly, it can be seen that $xy^2z \in I_{\frac{1}{2}} \cdot J_{\frac{3}{2}}$ and $x^3y \in I_{\frac{3}{2}} \cdot J_{\frac{1}{2}}$.
\end{example}

It would be desirable to see if the binomial expansion for rational powers in Theorem \ref{thm.binRAT} holds for arbitrary ideals.

\begin{question}
	\label{quest.binRAT}
	Let $I \subseteq A$ and $J \subseteq B$ be arbitrary (or homogeneous) proper ideals. Is it true that for any $u \in \QQ_+$, we have
	$$(I+J)_u = \sum_{0 \le \omega \le u, \ \omega \in \QQ} I_\omega \cdot J_{u-\omega}?$$
\end{question}

With another closer look at the binomial terms in Theorem \ref{thm.binRAT}, it can be realized that the distinct powers of $I$ and $J$ correspond to jumping numbers. Thus, the finite sum in Theorem \ref{thm.binRAT} can be made more precise using jumping numbers of powers of $I$ and $J$ as follows. The authors thank an anonymous referee for pointing out this observation. (By definition, if $u$ is sandwiched between two consecutive jumping numbers $j' < j$ of a monomial ideal $I$, i.e., $j' < u < j$, then we will also consider $u$ as a \emph{jumping number of $I$ on the interval $[0,u]$}.)

\begin{theorem}
	\label{thm.jump}
	Let $I \subseteq A$ and $J \subseteq B$ be monomial ideals. Let $u \in \QQ$ be any positive rational number. Then,
	$$(I+J)_u = \sum_{\substack{\omega \text{ is a jumping number} \\ \text{of $I$ on } [0,u]}} I_\omega \cdot J_{u - \omega} = \sum_{\substack{\theta \text{ is a jumping number} \\ \text{of $J$ on } [0,u]}} I_{u-\theta} \cdot J_\theta.$$
\end{theorem}

\begin{proof} We shall establish the first equality, as the second one can be similarly handled. Note that, by \cite[Theorem 5.9(1)]{D+2021}, all jumping numbers of $I$ and $J$ are rational. Notice also that, as indicated in the proof of Theorem \ref{thm.binRAT}, the binomial expansion of $(I+J)_u$ contains all distinct rational powers in the interval $[0,u]$ of $I$. Thus, it follows that
\begin{align}
	(I+J)_u \supseteq \sum_{\substack{\omega \text{ is a jumping number} \\ \text{of $I$ on } [0,u]}} I_\omega \cdot J_{u - \omega}.\label{eq.jump1}
\end{align}
	
To prove the other containment, observe that if $j' < j$ are two consecutive jumping numbers of $I$ on $[0,u]$, then for any $j' < \omega < j$, by \cite[Lemma 5.1 and Corollary 5.7]{D+2021}, we have
$$I_\omega = I_j \text{ and } J_{u-\omega} \subseteq J_{u-j}.$$
Thus, $I_\omega \cdot J_{u-\omega} \subseteq I_j \cdot J_{u-j}$, which is included in the right hand side of (\ref{eq.jump1}). This implies that all terms in the binomial expansion of Theorem \ref{thm.binRAT} are included in the right hand side of (\ref{eq.jump1}). Therefore,
$$(I+J)_u \subseteq \sum_{\substack{\omega \text{ is a jumping number} \\ \text{of $I$ on } [0,u]}} I_\omega \cdot J_{u - \omega},$$
and the proof of the desired equality completes.
\end{proof}

As illustrated in Example \ref{ex.notBE}, when $u = k$ is a positive integer, the right hand side of the binomial expansion for $\overline{(I+J)^k} = (I+J)_u$ given in Theorem \ref{thm.binRAT} has more terms than just the integral closures of powers of $I$ and $J$, which were as expressed in (\ref{eq.binIC}). This explains why we cannot expect (\ref{eq.binIC}) to hold for arbitrary monomial ideals. Our next result gives a sufficient condition for the binomial expansion (\ref{eq.binIC}) to hold and generalizes \cite[Theorem 2.1]{MT2021} to a larger class of ideals.

\begin{corollary}
	\label{thm.BinomialExpansion}
	Let $I \subseteq A$ and $J \subseteq B$ be monomial ideals. Suppose that for every $\alpha \in \ZZ_{\ge 0}^r$, $\nu^*_\alpha(I) \in \ZZ$. Then, for any $k \in \NN$, we have
	$$\overline{(I+J)^k} = \sum_{\ell=0}^k \overline{I^\ell}\cdot \overline{J^{k-\ell}}.$$
\end{corollary}

\begin{proof} It is easy to see that $\sum_{\ell=0}^k \overline{I^\ell}\cdot \overline{J^{k-\ell}} \subseteq \overline{(I+J)^k}$ (see, for example, \cite[Proposition 10.5.2]{SH2006}). We shall prove the other inclusion. Since $\overline{(I+J)^k}$ is a monomial ideal, it suffices to show that all monomials in $\overline{(I+J)^k}$ belong to $\sum\limits_{\ell = 0 }^k \overline{I^\ell} \cdot \overline{J^{k-\ell}}.$
	
Let $M_1$, $M_2$ and $M$ be exponent matrices as in Theorem \ref{thm.binRAT} (and using the same notations).
Consider any monomial $x^\a \in \overline{(I+J)^k}$, for $\a \in \ZZ_{\ge 0}^n$. By \cite[Proposition 1.1]{HT2019}, we have $\nu^*_\a(M) \ge k$. This condition states that the optimal solution to the linear programming problem (\ref{eq.LP}) is at least $k$.
	
Suppose that $\y = (y_1, \dots, y_m) \in \RR^m_{\ge 0}$ is a vector that gives the optimal solution to (\ref{eq.LP}). Then, we have
	$$M \cdot \y \le \a \text{ and } \1^m \cdot \y \ge k.$$
As before, write $\a = (\alpha, \0) + (\0, \beta)$, where $\alpha \in \ZZ_{\ge 0}^r$ and $\beta \in \ZZ_{\ge 0}^s$, and $\y = (\y_1, \0) + (\0, \y_2)$, where $\y_1 \in \RR^{m_1}_{\ge 0}$ and $\y_2 \in \RR^{m_2}_{\ge 0}$. As observed in the proof of Theorem \ref{thm.binRAT}, the optimization problem (\ref{eq.LP}) is equivalent to the two optimization problems ($\dagger$) and ($\sharp$).
	
Let $\ell$ be the optimal solution to ($\dagger$); that is, $\nu^*_\alpha(I) = \nu^*_\alpha(M_1) = \ell$. Then, the optimal solution to ($\sharp$) satisfies $\nu^*_\beta(M_2) \ge k-\ell$. By the hypotheses, $\ell \in \ZZ$ and $k-\ell \in \ZZ$. The inequalities $\nu^*_\alpha(M_1) \ge \ell$ and $\nu^*_\beta(M_2) \ge k-\ell$, by \cite[Proposition 1.1]{HT2019}, then imply that $x^{\alpha} \in \overline{I^\ell}$ and $x^\beta \in \overline{J^{k - \ell}}$. As a consequence, we get that $x^\a \in \overline{I^\ell} \cdot \overline{J^{k-\ell}}$, which belong to the right hand side of the desired equality. The assertion is proved.
\end{proof}

As immediate consequences of Corollary \ref{thm.BinomialExpansion}, we obtain the following corollaries which generalize \cite[Theorem 2.1]{MT2021}.

\begin{corollary} \label{cor.BIN}
	Let $I \subseteq A$ and $J \subseteq B$ be monomial ideals. Suppose that $I$ is squarefree and $I^{(k)} = \overline{I^k}$ for all $k \in \NN$. Then, for any $k \in \NN$, we have
	$$\overline{(I+J)^k} = \sum_{\ell=0}^k \overline{I^\ell}\cdot \overline{J^{k-\ell}}.$$
\end{corollary}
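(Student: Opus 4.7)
The plan is to deduce Corollary \ref{cor.BIN} from Theorem \ref{thm.BinomialExpansion} by verifying that the squarefree and symbolic-equals-integral-closure hypotheses force the integrality condition $\nu^*_\alpha(I) \in \ZZ$ for every $\alpha \in \NN^r$; everything else is then immediate.

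First I would invoke the standard description of symbolic powers of a squarefree monomial ideal. Since every associated prime of $I$ is minimal and generated by variables, $I^{(k)} = \bigcap_{P \in \Min(I)} P^k$. For a minimal prime $P = (X_{i_1}, \dots, X_{i_s}) \in \Min(I)$, let $z_P \in \{0,1\}^r$ be its indicator vector; then $x^\alpha \in P^k$ if and only if $\alpha \cdot z_P \ge k$. Defining $\mu_\alpha := \min_{P \in \Min(I)} \alpha \cdot z_P \in \ZZ$, we obtain the membership criterion $x^\alpha \in I^{(k)} \iff \mu_\alpha \ge k$. On the integral-closure side, \cite[Proposition 1.1]{HT2019} gives $x^\alpha \in \overline{I^k} \iff \nu^*_\alpha(I) \ge k$.

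Next I would exploit the hypothesis $I^{(k)} = \overline{I^k}$ for every $k \in \NN$, which translates into the biconditional $\nu^*_\alpha(I) \ge k \iff \mu_\alpha \ge k$ for every positive integer $k$. Taking $k = \mu_\alpha$ gives $\nu^*_\alpha(I) \ge \mu_\alpha$, while taking $k = \mu_\alpha + 1$ gives $\nu^*_\alpha(I) < \mu_\alpha + 1$. For the reverse bound, observe that each $z_P$ is feasible for the dual LP (b) defining $\tau^*_\alpha(I) = \nu^*_\alpha(I)$: since $P \supseteq I$, every minimal generator of $I$ meets the support of $z_P$, and because $I$ is squarefree this is exactly the inequality $M^{\text{T}} z_P \ge \1^{m_1}$. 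Hence $\nu^*_\alpha(I) \le \alpha \cdot z_P$ for each $P$, so $\nu^*_\alpha(I) \le \mu_\alpha$. Combined with $\nu^*_\alpha(I) \ge \mu_\alpha$, this forces $\nu^*_\alpha(I) = \mu_\alpha \in \ZZ$, and Corollary \ref{cor.BIN} follows at once from Theorem \ref{thm.BinomialExpansion}.

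The main subtlety to track is conceptual rather than computational: $\mu_\alpha$ is the optimum of an integer programming problem (minimum cover by indicator vectors of minimal primes), while $\nu^*_\alpha(I)$ is the optimum of its LP relaxation. The hypothesis $I^{(k)} = \overline{I^k}$ for all $k$ is precisely what closes the integrality gap between these two optima, and this identification is the core of the argument. One should also note in passing that a normally torsion-free squarefree monomial ideal automatically satisfies $I^{(k)} = I^k = \overline{I^k}$, so the corollary genuinely recovers and extends \cite[Theorem 2.1]{MT2021}.
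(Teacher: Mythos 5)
Your proposal is correct and follows essentially the same route as the paper: both deduce the corollary from Theorem \ref{thm.BinomialExpansion} by showing that the hypotheses force $\nu^*_\alpha(I)\in\ZZ$ for all $\alpha$. The paper compresses this into the single line $\nu^*_\alpha(I)=\tau^*_\alpha(I)=\tau_\alpha(I)\in\ZZ$ citing \cite[Proposition 1.1]{HT2019}, whereas you correctly supply the underlying argument that the equality $I^{(k)}=\overline{I^k}$ for all $k$ closes the integrality gap between the covering LP and its integer program.
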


\begin{proof} By \cite[Proposition 1.1]{HT2019}, for any $\alpha \in \ZZ_{\ge 0}^r$, we have $\nu^*_\alpha(I) = \tau^*_\alpha(I) = \tau_\alpha(I) \in \ZZ$. The conclusion now follows from Corollary \ref{thm.BinomialExpansion}.
\end{proof}

\begin{corollary}[{\cite[Theorem 2.1]{MT2021}}]
	Let $I \subseteq A$ be a normally torsion-free squarefree monomial ideal, and let $J \subseteq B$ be an arbitrary monomial ideal. Then, for any $k \in \NN$, we have
	$$\overline{(I+J)^k} = \sum_{\ell=0}^k I^\ell \cdot \overline{J^{k-\ell}}.$$
\end{corollary}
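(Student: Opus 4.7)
My plan is to derive the corollary as an immediate specialization of Corollary \ref{cor.BIN}. Two ingredients are required: verify the hypotheses of Corollary \ref{cor.BIN}, and then replace each integral closure $\overline{I^\ell}$ appearing on the right-hand side by $I^\ell$ itself.

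For the first step, recall that $I$ being normally torsion-free means, by definition, $\Ass(A/I^k) \subseteq \Ass(A/I)$ for every $k \geq 1$; for squarefree monomial ideals this is well known to be equivalent to the equality $I^{(k)} = I^k$ holding for all $k \in \NN$. Thus the symbolic and ordinary powers of $I$ coincide. Moreover, a classical theorem (Simis--Vasconcelos--Villarreal) asserts that every normally torsion-free squarefree monomial ideal is normal, i.e., every ordinary power $I^k$ equals its integral closure $\overline{I^k}$. Consequently, $I^{(k)} = I^k = \overline{I^k}$ for all $k \in \NN$, and in particular the hypothesis $I^{(k)} = \overline{I^k}$ of Corollary \ref{cor.BIN} is satisfied.

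Applying Corollary \ref{cor.BIN} then yields
$$\overline{(I+J)^k} = \sum_{\ell=0}^k \overline{I^\ell} \cdot \overline{J^{k-\ell}}.$$
Substituting $\overline{I^\ell} = I^\ell$ term by term delivers the stated formula. I do not anticipate any real obstacle here: once the normality of normally torsion-free squarefree monomial ideals is invoked, the result drops out of Corollary \ref{cor.BIN} with no further work.
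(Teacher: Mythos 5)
Your proposal is correct and follows essentially the same route as the paper: both reduce to Corollary \ref{cor.BIN} by establishing $I^k = \overline{I^k} = I^{(k)}$ for all $k$. The only cosmetic difference is that you invoke the normality of normally torsion-free squarefree monomial ideals as a cited theorem, whereas the paper derives the same equality directly from the sandwich $I^k \subseteq \overline{I^k} \subseteq I^{(k)}$ (the right-hand containment holding because symbolic powers of squarefree monomial ideals are integrally closed) combined with normal torsion-freeness.
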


\begin{proof} Since $I$ is a squarefree monomial ideal, we have $I^k \subseteq \overline{I^k} \subseteq I^{(k)}$ for all $k \in \NN$. The assumption that $I$ is normally torsion-free then implies that $I^k = \overline{I^k} = I^{(k)}$ for all $k \in \NN$. The assertion now follows from Corollary \ref{cor.BIN}.
\end{proof}

Making use of jumping numbers of powers of $I$ and $J$, as a consequence of Theorem \ref{thm.jump}, we present another sufficient condition for the binomial expansion (\ref{eq.binIC}) to hold. The authors again thank an anonymous referee for this observation.

\begin{corollary}
	\label{cor.bin_jump}
 Let $I \subseteq A$ and $J \subseteq B$ be monomial ideals, and let $k \in \NN$. Suppose that the jumping numbers of either $I$ or $J$ on $[0,k]$ are all integers. Then, we have
 $$\overline{(I+J)^k} = \sum_{\ell = 0}^k \overline{I^\ell} \cdot \overline{J^{k-\ell}}.$$
\end{corollary}

\begin{proof} As in the proof of Corollary \ref{thm.BinomialExpansion}, we have $\sum_{\ell = 0}^k \overline{I^\ell} \cdot \overline{J^{k-\ell}} \subseteq \overline{(I+J)^k}$. The other inclusion follows immediately from the hypothesis on jumping numbers of $I$ or $J$ and Theorem \ref{thm.jump}.
\end{proof}

\begin{corollary}
	\label{cor.k=1}
	Let $I \subseteq A$ and $J \subseteq B$ be monomial ideal. Then,
	$$\overline{I+J} = \overline{I} + \overline{J}$$
	if and only if either $I$ or $J$ does not have any jumping number in $(0,1)$.
\end{corollary}

\begin{proof}
	One implication follows directly from Corollary \ref{cor.bin_jump}. We shall establish the other implication; that is, if $\overline{I+J} = \overline{I} + \overline{J}$ then either $I$ or $J$ does not have any jumping number in $(0,1)$. Suppose, on the contrary, that both $I$ and $J$ have jumping numbers in $(0,1)$. 
	
	Let $r \in (0,1)$ be a jumping number of $I$. If there is a jumping number of $J$ lying in $[1-r,1)$, then it can be seen that $I_r \not= I_1 = \overline{I}$ and $J_{1-r} \not= J_1 = \overline{J}$. Thus, $I_rJ_{1-r}$ is not contained in $\overline{I} + \overline{J}$, which is a contradiction to Theorem \ref{thm.jump}. We have shown that, if $r \in (0,1)$ is a jumping number of $I$, then all jumping numbers in $(0,1)$ of $J$ are smaller than $1-r$. Hence, by interchanging $I$ and $J$ if necessary, we now may assume that $r < 1/2$. We may also take $r$ to be the smallest jumping number of $I$ and $J$ in $(0,1)$.
	
	It follows from \cite[Theorem 5.9.(3)]{D+2021} that $nr$ is a jumping number of $I$ for all $n \in \NN$. Let $mr$ be the largest multiple of $r$ that is strictly less than 1. Particularly, $1-mr \le r$. This implies that $J$ cannot have any jumping number lying in $(0,1-mr)$. Therefore, $J$ must have a jumping number lying in $[1-mr, 1)$. By the same argument as above, we conclude that $I_{mr}J_{1-mr}$ is not contained in $\overline{I} + \overline{J}$, which is a contradiction to Theorem \ref{thm.jump}. The assertion is proved.
\end{proof}

The condition that the jumping numbers of either $I$ or $J$ on $[0,k]$ are all integers in Corollary \ref{cor.bin_jump} is not a necessary condition, as illustrated in the following example. We thank an anonymous referee and Jonathan Monta\~no for providing us with this example.

\begin{example}
	\label{ex.notNess}
	Consider $I = (xy,yz,zx) \subseteq \kk[x,y,z]$ and $J = (ab,bc,ca) \subseteq \kk[a,b,c]$. Direct computation shows that
	\begin{align*} 
		(I+J)_2 & = \overline{(I+J)^2} \\
		& = (b^2c^2, abc^2, ab^2c, a^2c^2, a^2bc, a^2b^2, yzbc, yzac, yzab, y^2z^2, xzbc, xzac, xzab, \\
		& \quad \  \  \  xybc, xyac, xyab, xyz^2, xy^2z, x^2z^2, x^2yz, x^2y^2) \\
		& = \overline{I^2} + \overline{I} \cdot \overline{J} + \overline{J^2} = I_2 + I_1 J_1 + J_2.
	\end{align*}
	On the other hand, it can be seen that $\frac{3}{2}$ is a jumping number of both $I$ and $J$ on $[0,2]$.
\end{example}


\section{Depth and regularity} \label{sec.depthreg}

Depth and regularity are perhaps among the most important invariant associated to ideals and modules. In this section, we shall use the binomial expansions established in the previous section to give bounds and precise formulas for the depth and regularity of rational powers $(I+J)_u$ and the integral closures $\overline{(I+J)^k}$.

We start with a general bounds for the depth and regularity of a rational power $(I+J)_u$, whose proof is an easy adaptation of that of \cite[Theorem 4.2]{HNTT2020} together with the binomial expansion in Theorem \ref{thm.jump}.

\begin{theorem}
	\label{thm.boundsRegDepthJump}
	Let $u \in \QQ_+$, and assume that the jumping numbers of $I$ on the interval $[0,u]$ are $0 \le k_0 < k_1 < \dots < k_n \le u$. Then, we have
		\begin{enumerate}
		\item[{\rm (1)}] $\depth S/(I+J)_u \ge$ \newline
		\hspace*{3ex} ${\displaystyle \min_{\substack{\ell \in [1,n-1] \\ \theta \in [1,n]}} \{\depth A/I_{k_{n-\ell}} + \depth B/J_{u-k_{n-\ell}} + 1, \depth A/I_{k_{n-\theta+1}} + \depth B/J_{u-k_{n-\theta}}\}}$,
		\item[{\rm (2)}] $\reg S/(I+J)_u \le$ \newline
		\hspace*{3ex} ${\displaystyle \max_{\substack{\ell \in [1,n-1] \\ \theta \in [1,n]}} \{\reg A/I_{k_{n-\ell}} + \reg B/J_{u-k_{n-\ell}} + 1, \reg A/I_{k_{n-\theta+1}} + \reg B/J_{u-k_{n-\theta}}\}}$.
	\end{enumerate}
\end{theorem}

\begin{proof}
	By Theorem \ref{thm.jump}, we have
	$$(I+J)_u = I_{k_n} \cdot J_{u-k_n} + I_{k_{n-1}} \cdot J_{u-k_{n-1}} + \dots + I_{k_0} \cdot J_{u-k_0}.$$
	For $0 \le t \le n$, set
	$$P_{u,t} = I_{k_n}\cdot J_{u-k_n} + I_{k_{n-1}}\cdot J_{u-k_{n-1}} + \dots + I_{k_{n-t}} \cdot J_{u-k_{n-t}}.$$
	
Observe that
\begin{enumerate}
	\item[(a)] $P_{u,t} = P_{u,t-1} + I_{k_{n-t}} \cdot J_{u-k_{n-t}}$ for $1 \le t \le n$; and
	\item[(b)] $P_{u,t-1} \cap I_{k_{n-t}} \cdot J_{j-k_{n-t}} = I_{k_{n-t+1}} \cdot J_{u-k_{n-t}}.$
\end{enumerate}
\noindent Indeed, (a) is obvious. To see (b), notice first that, by \cite[Lemma 5.1]{D+2021}, $P_{u,t-1} \subseteq I_{k_{n-t+1}}$. Thus,
$$P_{u,t-1} \cap I_{k_{n-t}} \cdot J_{j-k_{n-t}} \subseteq I_{k_{n-t+1}} \cap J_{u-k_{n-t}} = I_{k_{n-t+1}} \cdot J_{u-k_{n-t}}.$$
On the other hand, also by \cite[Lemma 5.1]{D+2021}, we have
$$I_{k_{n-t+1}} \cdot J_{u-k_{n-t}} \subseteq I_{k_{n-t+1}} \cdot J_{u-k_{n-t+1}} \subseteq P_{u,t-1} \text{ and } I_{k_{n-t+1}} \cdot J_{u-k_{n-t}} \subseteq I_{k_{n-t}} \cdot J_{u-k_{n-t}}.$$
Therefore,
$$P_{u,t-1} \cap I_{k_{n-t}} \cdot J_{j-k_{n-t}} = I_{k_{n-t+1}} \cdot J_{u-k_{n-t}}.$$
The desired inequality for depth and regularity now follow by tracing through the exact sequences
$$0 \rightarrow S/I_{k_{n-t+1}} \cdot J_{u-k_{n-t}} \rightarrow S/P_{u,t-1} \oplus S/I_{k_{n-t}} \cdot J_{u-k_{n-t}} \rightarrow S/P_{u,t} \rightarrow 0,$$
and making use of \cite[Lemmas 2.2 and 3.2]{HoaT2010}.
\end{proof}

By a similar line of arguments as in Theorem \ref{thm.boundsRegDepthJump}, Corollaries \ref{thm.BinomialExpansion} and \ref{cor.bin_jump} then give the following consequence.

\begin{lemma}[\protect{See \cite[Theorem 4.2]{HNTT2020}}]
	\label{lem.boundsRegDepth}
	Let $k \in \NN$. Suppose that at least one of the following conditions holds:
	\begin{enumerate}
		\item for every nonnegative integral vector $\alpha \in \ZZ_{\ge 0}^r$, $\nu^*_\alpha(I) \in \ZZ$; or
		\item the jumping numbers on $[0,k]$ of either $I$ or $J$ are all integers.
	\end{enumerate}
	Then, 
	\begin{enumerate}
		\item[{\rm (1)}] $\depth S/\overline{(I+J)^k} \ge$ \newline
		\hspace*{3ex} ${\displaystyle \min_{\substack{i \in [1,k-1] \\ j \in [1,k]}} \{\depth A/\overline{I^{k-i}} + \depth B/\overline{J^i} + 1, \depth A/\overline{I^{k-j+1}} + \depth B/\overline{J^j}\}}$,
		\item[{\rm (2)}] $\reg S/\overline{(I+J)^k} \le$ \newline
		\hspace*{3ex} ${\displaystyle \max_{\substack{i \in [1,k-1] \\ j \in [1,k]}} \{\reg A/\overline{I^{k-i}} + \reg B/\overline{J^i} + 1, \reg A/\overline{I^{k-j+1}} + \reg B/\overline{J^j}\}}$.
	\end{enumerate}
\end{lemma}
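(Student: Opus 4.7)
The plan is to derive the lemma from \cite[Theorem 4.2]{HNTT2020} via Theorem \ref{thm.BinomialExpansion}. The theorem of HNTT2020 produces exactly the depth and regularity inequalities of the stated shape for any filtration of ideals in $S = A \otimes_\kk B$ that decomposes as a binomial-type sum of powers of ideals in the two factors. Since the hypothesis $\nu^*_\alpha(I) \in \ZZ$ for all $\alpha \in \NN^r$ produces, via Theorem \ref{thm.BinomialExpansion}, the decomposition
$$\overline{(I+J)^k} = \sum_{\ell=0}^k \overline{I^\ell} \cdot \overline{J^{k-\ell}},$$
the lemma reduces to checking the hypotheses of \cite[Theorem 4.2]{HNTT2020}, which amount to little beyond the (automatic) fact that each $\overline{I^k}$ and $\overline{J^k}$ is a proper monomial ideal.

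To make the underlying mechanism transparent, I would set $P_{k,t} := \sum_{i=0}^t \overline{I^{k-i}} \cdot \overline{J^i}$ and $Q_t := \overline{I^{k-t}} \cdot \overline{J^t}$, so that $P_{k,k} = \overline{(I+J)^k}$ and $P_{k,t} = P_{k,t-1} + Q_t$, and establish the crucial intersection identity
$$P_{k,t-1} \cap Q_t = \overline{I^{k-t+1}} \cdot \overline{J^t}.$$
This identity is a direct consequence of the disjointness of the variable sets of $A$ and $B$: a monomial $x^\alpha y^\beta$ lies in $\overline{I^a} \cdot \overline{J^b}$ if and only if $x^\alpha \in \overline{I^a}$ and $y^\beta \in \overline{J^b}$, so the intersection on the left reduces to an intersection on the $A$-side of the corresponding chain of powers of $I$.

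Combining this with the Mayer--Vietoris short exact sequence
$$0 \longrightarrow S/(P_{k,t-1} \cap Q_t) \longrightarrow S/P_{k,t-1} \oplus S/Q_t \longrightarrow S/P_{k,t} \longrightarrow 0$$
and the classical depth and regularity estimates for a short exact sequence yields recursive bounds that, when iterated from $t=1$ to $t=k$ and evaluated via the tensor-product formulas $\depth S/(\overline{I^a} \cdot \overline{J^b}) = \depth A/\overline{I^a} + \depth B/\overline{J^b} + 1$ and $\reg S/(\overline{I^a} \cdot \overline{J^b}) = \reg A/\overline{I^a} + \reg B/\overline{J^b} + 1$ (valid for proper nonzero monomial ideals in disjoint variables), produce the asserted bounds. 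The main piece of bookkeeping, which I expect to be the only nontrivial step, is verifying that the boundary contributions from $t=0$ and $t=k$---namely $\depth S/P_{k,0} = \depth A/\overline{I^k} + s$ and $\depth S/Q_k = r + \depth B/\overline{J^k}$, together with their regularity analogues---are absorbed by terms already present in the families indexed by $i \in [1,k-1]$ and $j \in [1,k]$; this follows from the crude estimates $\depth B/\overline{J^a} \le s - 1$ and $\depth A/\overline{I^a} \le r - 1$ for proper nonzero ideals (and the non-negativity of regularity for the regularity bound), which allow us to drop these extraneous endpoint terms from the final minimum and maximum without altering their values.
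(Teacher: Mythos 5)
Your proposal is correct and follows essentially the same route as the paper: the lemma is obtained by combining the binomial expansion of Theorem \ref{thm.BinomialExpansion} (which is where the hypothesis $\nu^*_\alpha(I)\in\ZZ$ enters) with \cite[Theorem 4.2]{HNTT2020}, whose proof proceeds exactly via the decomposition $P_{k,t}=P_{k,t-1}+\overline{I^{k-t}}\cdot\overline{J^t}$, the intersection identity, the resulting short exact sequences, and the Hoa--Tam formulas for $\depth$ and $\reg$ of a product of ideals in disjoint variables. Your bookkeeping for the endpoint terms $t=0$ and $t=k$ is also right, so nothing further is needed.
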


We will show that the inequalities in Lemma \ref{lem.boundsRegDepth} are in fact equalities. To this end, for $0 \le t \le k$, as before, set
$$P_{k,t} = \overline{I^k}\cdot \overline{J^0} + \overline{I^{k-1}}\cdot \overline{J} + \dots + \overline{I^{k-t}}\cdot \overline{J^t}.$$
Then, as in the proof of Theorem \ref{thm.boundsRegDepthJump} (see also \cite[Theorem 4.2]{HNTT2020}), we have
\begin{enumerate}
	\item[(a)] $P_{k,t} = P_{k,t-1} + \overline{I^{k-t}}\cdot \overline{J^t}$ for $1 \le t \le k$;
	\item[(b)] $P_{k,t-1} \cap \overline{I^{k-t}}\cdot \overline{J^t} = \overline{I^{k-t+1}}\cdot \overline{J^t}.$
\end{enumerate}
These decomposition allow us to evoke the following notion and property of a Betti splitting to investigate the depth and regularity of $P_{k,t}$; see \cite{FHVT2009}. For homogeneous ideals $P, I, J$ in $S$ such that $P = I+J$, the sum $P = I+J$ is called a \emph{Betti splitting} if the graded Betti numbers of $P, I, J$ and $I \cap J$ satisfy the following relation:
$$\beta_{i,j}(P) = \beta_{i,j}(I) + \beta_{i,j}(J) + \beta_{i-1,j}(I \cap J) \text{ for all } i \ge 0 \text{ and } j \in \ZZ.$$

\begin{lemma}[{\cite[Corollary 2.2]{FHVT2009}}]
	\label{lem.BSplit}
	Let $P = I+J$ be a Betti splitting in $S$. Then,
	\begin{enumerate}
		\item[{\rm (a)}] $\depth S/P = \min\{\depth S/I, \depth S/J, \depth S/I \cap J - 1\},$
		\item[{\rm (b)}] $\reg S/P = \max\{\reg S/I, \reg S/J, \reg S/I \cap J - 1\}.$
	\end{enumerate}
\end{lemma}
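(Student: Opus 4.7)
The plan is to derive both equalities directly from the Betti number identity defining a Betti splitting, combined with the standard formulas $\pd M = \max\{i : \beta_{i,j}(M) \ne 0 \text{ for some } j\}$ and $\reg M = \max\{j-i : \beta_{i,j}(M) \ne 0\}$, plus the Auslander--Buchsbaum formula over the polynomial ring $S$.

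First I would transfer the defining identity from the ideals to their quotient modules. For any proper homogeneous ideal $K \subseteq S$, the short exact sequence $0 \to K \to S \to S/K \to 0$ gives $\beta_{i,j}(S/K) = \beta_{i-1,j}(K)$ for all $i \ge 1$ and $j \in \ZZ$. Substituting this into the Betti splitting relation $\beta_{i,j}(P) = \beta_{i,j}(I) + \beta_{i,j}(J) + \beta_{i-1,j}(I \cap J)$ and shifting indices by one produces
\begin{equation*}
\beta_{i,j}(S/P) \;=\; \beta_{i,j}(S/I) + \beta_{i,j}(S/J) + \beta_{i-1,j}(S/(I \cap J))
\end{equation*}
for all $i \ge 1$ and $j \in \ZZ$ (the case $i=0$ is harmless, since all four quotients agree with $S$ in homological degree zero).

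For part (b), I would read off regularity from this identity. A nonzero summand $\beta_{i,j}(S/I)$ or $\beta_{i,j}(S/J)$ contributes $j-i$ to the maximum defining the regularity. A nonzero summand $\beta_{i-1,j}(S/(I \cap J))$ contributes $j-(i-1)$ to $\reg(S/(I \cap J))$, hence it contributes $j-i = \reg(S/(I \cap J)) - 1$ on the $S/P$ side. Taking the maximum over all pairs $(i,j)$ with $\beta_{i,j}(S/P) \ne 0$ gives exactly $\reg(S/P) = \max\{\reg(S/I),\, \reg(S/J),\, \reg(S/(I \cap J)) - 1\}$.

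For part (a), the same identity shows that $\pd(S/P) = \max\{\pd(S/I), \pd(S/J), \pd(S/(I \cap J)) + 1\}$, since a nonzero Betti number of $S/(I \cap J)$ in homological position $i-1$ forces a nonzero Betti number of $S/P$ in position $i$, while positions coming from $S/I$ or $S/J$ are not shifted. Applying the Auslander--Buchsbaum formula $\depth(S/K) = \dim S - \pd(S/K)$ to each of the four quotients converts the maximum on the right into the desired minimum, yielding part (a). The only real bookkeeping issue, and the step most prone to off-by-one errors, is tracking the $\pm 1$ index shifts consistently: the $+1$ coming from the homological shift on $S/(I \cap J)$ must translate into a $-1$ in the depth formula and into a $-1$ in the regularity formula; once this is verified, both equalities are immediate consequences of the identity above.
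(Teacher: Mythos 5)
The paper offers no proof of this lemma at all: it is quoted verbatim from \cite[Corollary 2.2]{FHVT2009}, so there is nothing internal to compare against. Your derivation is, in substance, exactly how that corollary is proved in the source: additivity of graded Betti numbers (with the homological shift on the $I\cap J$ term), the characterizations $\pd = \max\{i : \beta_{i,j}\neq 0\}$ and $\reg = \max\{j-i : \beta_{i,j}\neq 0\}$, the absence of cancellation because all summands are nonnegative, and Auslander--Buchsbaum to convert projective dimension into depth. That is correct and complete in outline. One bookkeeping slip worth fixing: your displayed identity for the quotient modules fails at $(i,j)=(1,0)$, not at $i=0$. Indeed, for $i=1$ the shifted term should be $\beta_{-1,j}(I\cap J)=0$, whereas your right-hand side contains $\beta_{0,j}(S/(I\cap J))$, which equals $1$ when $j=0$; so the identity as written overcounts by one at that single position. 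The discrepancy is immaterial to both conclusions (the phantom term contributes only the trivial bounds $\reg S/P\ge -1$ and $\pd S/P\ge 1$), and it disappears entirely if you run the argument at the level of the ideals $P$, $I$, $J$, $I\cap J$ and only pass to quotients at the very end via $\reg S/K=\reg K-1$ and $\pd S/K=\pd K+1$.
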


As in the proof of \cite[Theorem 5.3]{HNTT2020}, to establish the equality in Lemma \ref{lem.boundsRegDepth}, it suffices to show that $P_{k,t} = P_{k,t-1} + \overline{I^{k-t}}\cdot \overline{J^t}$ is a Betti splitting. This is also how we proceed. It turns out that Betti splitting can be characterized by \emph{Tor-vanishing} homomorphisms. We shall recall the following necessary terminology and results from \cite{FHVT2009, HNTT2020, NV2019}.

\begin{definition} [{See \cite{HNTT2020, NV2019}}] \quad
	\label{def.Torvanishing}
	\begin{enumerate}
		\item We say that a homomorphism $\phi: M \rightarrow N$ of graded $S$-modules is \emph{Tor-vanishing} if
		$$\Tor^S_i(\kk, \phi) = 0 \text{ for all } i \ge 0.$$
		\item We say that a filtration $\{Q_k\}_{k \in \NN}$ of $S$-modules is a \emph{Tor-vanishing filtration} if, for all $k \ge 1$, the inclusion map $Q_k \rightarrow Q_{k-1}$ is \emph{Tor-vanishing}. That is,
		$\Tor^S_i(\kk, Q_k) \rightarrow \Tor^S_i(\kk, Q_{k-1})$ is the zero map for all $k \ge 1$.
	\end{enumerate}
\end{definition}

\begin{lemma}[{\cite[Proposition 2.1]{FHVT2009}}]
	\label{lem.BSvsTV}
	The following conditions are equivalent:
	\begin{enumerate}
		\item[{\rm (1)}] The decomposition $P = I+J$ is a Betti splitting, and
		\item[{\rm (2)}] The inclusion maps $I \cap J \rightarrow I$ and $I \cap J \rightarrow J$ are Tor-vanishing.
	\end{enumerate}
\end{lemma}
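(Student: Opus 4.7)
The plan is to derive both implications from the long exact sequence in Tor attached to the Mayer--Vietoris short exact sequence
$$0 \longrightarrow I \cap J \xrightarrow{(\iota_I,\, -\iota_J)} I \oplus J \xrightarrow{+} P \longrightarrow 0,$$
in which $\iota_I : I \cap J \hookrightarrow I$ and $\iota_J : I \cap J \hookrightarrow J$ are the natural inclusions and $+$ is addition inside $S$. Applying $\Tor_\bullet^S(\kk, -)$ and reading off internal degree $j$ produces
$$\cdots \to \Tor_i^S(\kk, I \cap J)_j \xrightarrow{\phi_i} \Tor_i^S(\kk, I)_j \oplus \Tor_i^S(\kk, J)_j \to \Tor_i^S(\kk, P)_j \xrightarrow{\delta_i} \Tor_{i-1}^S(\kk, I \cap J)_j \to \cdots,$$
where $\phi_i = \bigl(\Tor_i^S(\kk, \iota_I),\, -\Tor_i^S(\kk, \iota_J)\bigr)$. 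All of the content of the lemma will be extracted from this one sequence.

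Next I would rephrase the Betti splitting identity in terms of the maps $\phi_i$. Slicing the long exact sequence at each $\phi_i$ into three short exact sequences and taking $\kk$-dimensions yields, in every bidegree $(i,j)$, the universal rank formula
$$\beta_{i,j}(P) = \beta_{i,j}(I) + \beta_{i,j}(J) - \beta_{i,j}(I \cap J) + a_i(j) + a_{i-1}(j),$$
where $a_i(j) := \dim_\kk \ker (\phi_i)_j$ (with the convention $a_{-1}(j) = 0$). Because $0 \le a_i(j) \le \beta_{i,j}(I \cap J)$, the Betti splitting relation $\beta_{i,j}(P) = \beta_{i,j}(I) + \beta_{i,j}(J) + \beta_{i-1,j}(I \cap J)$ is equivalent to the equation $a_i(j) + a_{i-1}(j) = \beta_{i,j}(I \cap J) + \beta_{i-1,j}(I \cap J)$ holding for every $(i,j)$, and the extremal bounds then force $a_i(j) = \beta_{i,j}(I \cap J)$ for all $i$ and $j$. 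This is exactly the statement that $\phi_i$ vanishes identically for every $i \ge 0$.

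Finally I would observe that $\phi_i$ lands in a direct sum and is built from the two component maps $\Tor_i^S(\kk, \iota_I)$ and $-\Tor_i^S(\kk, \iota_J)$; hence $\phi_i$ vanishes for every $i \ge 0$ if and only if each of these two components vanishes for every $i$, which is precisely the condition that both $\iota_I$ and $\iota_J$ are Tor-vanishing in the sense of Definition \ref{def.Torvanishing}. Chaining the three steps gives $(1) \Leftrightarrow \phi_\bullet \equiv 0 \Leftrightarrow (2)$, which is the desired equivalence.

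The main obstacle I anticipate is clerical rather than conceptual: tracking the index shift between $\phi_i$ and $\phi_{i-1}$ in the rank formula, and respecting the boundary case $i = 0$ (where $\Tor_{-1}$ and $a_{-1}$ must be read as $0$), so that the derived dimension identity matches the Betti splitting relation on the nose. Once that bookkeeping is set up cleanly, the rest of the argument is purely formal.
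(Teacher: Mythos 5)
Your argument is correct: the paper itself states this lemma as a quotation of \cite[Proposition 2.1]{FHVT2009} without proof, and your rank count along the long exact sequence of $0 \to I\cap J \to I\oplus J \to P \to 0$ is essentially the proof given in that reference. The key step --- that $a_i(j)+a_{i-1}(j)=\beta_{i,j}(I\cap J)+\beta_{i-1,j}(I\cap J)$ together with the bounds $a_i(j)\le\beta_{i,j}(I\cap J)$ forces each summand to be extremal, hence $\phi_i=0$, hence both components $\Tor_i^S(\kk,\iota_I)$ and $\Tor_i^S(\kk,\iota_J)$ vanish --- is handled correctly, including the $i=0$ boundary convention.
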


In light of Lemmas \ref{lem.BSplit} and \ref{lem.BSvsTV}, to show that the inequality in Lemma \ref{lem.boundsRegDepth} are equality, our argument is based on the following essential fact: the family of integral closures of powers of a monomial ideal is a Tor-vanishing filtration.

\begin{lemma}
	\label{lem.Torvanishing}
	Let $I$ be a monomial ideal in $A$.
	\begin{enumerate}
		\item[{\rm (1)}] $\{\overline{I^k}\}_{k \in \NN}$ is a Tor-vanishing filtration of ideals.
		\item[{\rm (2)}] Assuming that $\mm$ is the maximal homogeneous ideal in $A$, for any $k \in \NN$, we have
		$$\overline{I^{k}} \subseteq \mm \cdot \overline{I^{k-1}}.$$
	\end{enumerate}
\end{lemma}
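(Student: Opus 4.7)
The plan is to establish part (2) first and then deduce part (1) from it, since the two parts are tightly linked.

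For part (2), since $\overline{I^k}$ is a monomial ideal, it suffices to verify that every monomial $x^\a \in \overline{I^k}$ belongs to $\mm \cdot \overline{I^{k-1}}$. By Lemma \ref{lem.membershipRAT}, this membership is equivalent to $\nu^*_\a(I) \ge k$, so there exists a primal feasible $\y \in \RR^{m_1}_{\ge 0}$ with $M\y \le \a$ and $\1^{m_1} \cdot \y \ge k$. The key step will be to exhibit such a $\y$ in which some coordinate $y_{j_0} \ge 1$. Once this is achieved, the vector $\y - e_{j_0}$ is nonnegative and feasible for the LP associated to $\a - g_{j_0}$ (where $g_{j_0}$ denotes the $j_0$-th column of $M$, i.e., the exponent of the $j_0$-th minimal generator of $I$), with objective value at least $k - 1$. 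Therefore $x^{\a - g_{j_0}} \in \overline{I^{k-1}}$, and the factorization $x^\a = x^{g_{j_0}} \cdot x^{\a - g_{j_0}}$ displays $x^\a \in I \cdot \overline{I^{k-1}} \subseteq \mm \cdot \overline{I^{k-1}}$, using $I \subseteq \mm$ (as $I$ is proper).

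The case $k = 1$ of this claim is immediate: $\overline{I^0} = A$ and $I \subseteq \mm$ give $\overline{I} \subseteq \overline{\mm} = \mm = \mm \cdot \overline{I^0}$. For $k \ge 2$, the existence of a feasible $\y$ with some $y_{j_0} \ge 1$ is clear whenever the primal LP admits an optimal solution with this property; the main obstacle is the subcase where every primal optimum has all coordinates strictly less than $1$ (so that $\sum_j y_j \ge k$ is achieved only by spreading mass thinly across many generators). I expect this to be resolved by a redistribution/exchange argument inside the polytope of feasible solutions, exploiting the integrality of $\a$ and the slack $\a - M\y$ to concentrate sufficient mass onto a single generator of $I$.

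For part (1), once part (2) is in hand, the inclusion $\overline{I^k} \hookrightarrow \overline{I^{k-1}}$ factors through $\mm \cdot \overline{I^{k-1}} \hookrightarrow \overline{I^{k-1}}$. The Tor-vanishing of such an inclusion --- a map of finitely generated graded modules whose image lies in $\mm$ times the target --- is standard in this setting; see for example \cite[Section 2]{NV2019}, where it is proved by lifting to minimal graded free resolutions and arranging that every component of the chain map has entries in $\mm$. Applied with $M = \overline{I^k}$ and $N = \overline{I^{k-1}}$, this yields that $\{\overline{I^k}\}_{k \in \NN}$ is a Tor-vanishing filtration, finishing the proof.
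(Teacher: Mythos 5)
There are two genuine gaps, one in each half of your argument. For part (2), the claim that one can always find a feasible $\y$ with some coordinate $y_{j_0}\ge 1$ is false, and no redistribution argument can rescue it: take $I=(x_1^2,x_2^2,x_3^2,x_4^2)$ and $\a=(1,1,1,1)$. Since $(x_1x_2x_3x_4)^2\in I^4=(I^2)^2$, the squarefree monomial $x_1x_2x_3x_4$ lies in $\overline{I^2}$, but the constraint $M\cdot\y\le\a$ forces every $y_j\le 1/2$; correspondingly $x_1x_2x_3x_4\notin I\cdot\overline{I}$, because every monomial of $I\cdot\overline{I}$ is divisible by some $x_i^2$. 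So the containment $\overline{I^k}\subseteq I\cdot\overline{I^{k-1}}$ that your factorization would produce simply fails for $k\ge 2$. What is true, and what the paper proves by a short combinatorial argument rather than by linear programming, is that one may always divide out a single \emph{variable}: for every minimal monomial generator $g$ of $\overline{I^k}$ and every variable $x$ dividing $g$, one has $g/x\in\overline{I^{k-1}}$ (write $g^r=f_1\cdots f_{kr}\cdot h$ with $f_i\in I$ and discard at most $r$ of the $f_i$ to absorb $x^r$). This is the containment $\delta^*(\overline{I^k})\subseteq\overline{I^{k-1}}$, and part (2) follows from it because $\overline{I^k}\subseteq\mm\cdot\delta^*(\overline{I^k})$.

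For part (1), the deduction from (2) is also unsound: an inclusion of ideals $J\hookrightarrow I$ with $J\subseteq\mm I$ need \emph{not} be Tor-vanishing. For example, $J=(x^2y,xy^2)\subseteq\mm\cdot(x^2,y^2)=\mm I$ in $\kk[x,y]$, yet $\Tor_1^S(\kk,J)\to\Tor_1^S(\kk,I)$ is an isomorphism of one-dimensional degree-$4$ components: the Koszul syzygy $y\cdot(x^2y)-x\cdot(xy^2)$ lifts to $y^2\cdot(x^2)-x^2\cdot(y^2)$ with a unit coefficient. The hypothesis $J\subseteq\mm I$ only controls $\Tor_0$; one can arrange the zeroth component of a lifted chain map between minimal resolutions to have entries in $\mm$, but the higher components need not follow. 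The criterion from Nguyen--Vu that the paper actually invokes requires the strictly stronger condition $\delta^*(J)\subseteq I$, i.e., $f/x\in I$ for \emph{every} variable $x$ dividing a minimal generator $f$ of $J$, not merely for some variable. So the logical order of your plan must be reversed: one establishes the $\delta^*$-containment first, and both (1) and (2) fall out of it.
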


\begin{proof} (1) Recall from \cite{HNTT2020} that for a monomial ideal $I$, $\delta^*(I)$ denotes the ideal generated by elements of the form $f/x$, where $f$ is a minimal monomial generator of $I$ and $x$ is a variable dividing $f$. By \cite[Proposition 4.4 and Lemma 4.2]{NV2019} (see also \cite[Proposition 3.5]{AM2019}), it suffices to show that
	\begin{align}
		\delta^*(\overline{I^k}) \subseteq \overline{I^{k-1}}. \label{eq.delta}
	\end{align}
	
	Let $M$ be any minimal monomial generator of $\overline{I^k}$. Then,
	$$M^r \in (I^k)^r = I^{kr} \text{ for some } r \in \NN.$$
	That is, there exist monomials $f_1, \dots, f_{kr} \in I$ (not necessarily distinct) such that $M^r = f_1 \dots f_{kr}$.
	
	Let $x$ be a variable dividing $M$. Clearly, $x^r ~\big|~ M^r$. This implies that there exist $i_1, \dots, i_s$, for some $s \le r$ such that $x$ appears in $f_{i_1} \dots f_{i_s}$ with powers at least $r$. By considering the product of $f_i$'s for $i \not= i_1, \dots, i_s$, it is easy to see that $(M/x)^r \in I^{kr-r} = (I^{k-1})^r$. Thus, $M/x \in \overline{I^{k-1}}$. We have established (\ref{eq.delta}).
	
	(2) The statement is trivial for $k = 1$. Suppose that $k \ge 2$. By definition, we have
	$$\overline{I^k} \subseteq \mm \cdot \delta^*(\overline{I^k}).$$
	The desired containment now follows from \eqref{eq.delta}.
\end{proof}

As a consequence of Lemma \ref{lem.Torvanishing}, we immediately obtain the following containment, which is of independent interest.

\begin{corollary} \label{cor.containment}
	For any positive integer $e$ we have
	$$\overline{I^{k+e}} \subseteq \mm^e \cdot \overline{I^{k}}.$$
\end{corollary}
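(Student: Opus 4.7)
The plan is to proceed by a straightforward induction on $e$, using Lemma \ref{lem.Torvanishing}(2) as the key input. The base case $e=1$ is exactly Lemma \ref{lem.Torvanishing}(2) applied with $k$ replaced by $k+1$, giving $\overline{I^{k+1}} \subseteq \mm \cdot \overline{I^{k}}$.

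For the inductive step, assume the containment $\overline{I^{k+e-1}} \subseteq \mm^{e-1} \cdot \overline{I^{k}}$ has been established for some $e \ge 2$. Apply Lemma \ref{lem.Torvanishing}(2) to the power $k+e$, which yields
$$\overline{I^{k+e}} \subseteq \mm \cdot \overline{I^{k+e-1}}.$$
Multiplying the inductive hypothesis by $\mm$ and combining with this containment gives
$$\overline{I^{k+e}} \subseteq \mm \cdot \overline{I^{k+e-1}} \subseteq \mm \cdot \mm^{e-1} \cdot \overline{I^{k}} = \mm^{e} \cdot \overline{I^{k}},$$
which is the desired conclusion. Since each step is a direct consequence of Lemma \ref{lem.Torvanishing}(2), there is no genuine obstacle here beyond iterating the previously established containment $e$ times.
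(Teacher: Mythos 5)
Your proof is correct and is exactly the paper's argument: the paper proves this corollary by ``a repeated application of part (2) of Lemma \ref{lem.Torvanishing},'' which is precisely the induction on $e$ that you carried out in detail.
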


\begin{proof}
The assertion follows from a repeated application of part (2) of Lemma \ref{lem.Torvanishing}.
\end{proof}

We are now ready to state our last main result.

\begin{theorem}
	\label{thm.DepthReg}
	Let $I \subseteq A$ and $J \subseteq B$ be monomial ideals, and let $k \in \NN$. Suppose that at least one of the following conditions holds:
	\begin{enumerate}
		\item for every nonnegative integral vector $\alpha \in \ZZ_{\ge 0}^r$, $\nu^*_\alpha(I) \in \ZZ$; or
		\item the jumping numbers on $[0,k]$ of either $I$ or $J$ are all integers.
	\end{enumerate}
	Then, we have
		\begin{enumerate}
		\item[{\rm (1)}] $\depth S/\overline{(I+J)^k}=$ \newline
		\hspace*{3ex} ${\displaystyle \min_{\substack{i \in [1,k-1] \\ j \in [1,k]}} \{\depth A/\overline{I^{k-i}} + \depth B/\overline{J^i} + 1, \depth A/\overline{I^{k-j+1}} + \depth B/\overline{J^j}\}}$,
		\item[{\rm (2)}] $\reg S/\overline{(I+J)^k} =$ \newline
		\hspace*{3ex} ${\displaystyle \max_{\substack{i \in [1,k-1] \\ j \in [1,k]}} \{\reg A/\overline{I^{k-i}} + \reg B/\overline{J^i} + 1, \reg A/\overline{I^{k-j+1}} + \reg B/\overline{J^j}\}}$.
	\end{enumerate}
\end{theorem}

\begin{proof} By Lemma \ref{lem.Torvanishing}, we have that $\{\overline{I^k}\}_{k \in \NN}$ and $\{\overline{J^k}\}_{k \in \NN}$ are Tor-vanishing filtrations of ideals in $A$ and $B$, respectively. It then follows from the proof of \cite[Theorem 5.3]{HNTT2020} that $P_{k,t} = P_{k,t-1} + \overline{I^{k-t}} \cdot \overline{J^t}$ is a Betti splitting for all $1 \le t \le k$. Noticing again that $P_{k,t-1} \cap \overline{I^{k-t}} \cdot \overline{J^t} = \overline{I^{k-t+1}} \cdot \overline{J^t}$. Now, applying Lemma \ref{lem.BSplit} and \cite[Lemmas 2.2 and 2.3]{HoaT2010} in the same way as in the proof of \cite[Theorems 4.2 and 5.3]{HNTT2020}, we obtain the desired equality.
\end{proof}

As noted before, the condition that $\nu^*_\alpha(I) \in \ZZ$ in Theorem \ref{thm.DepthReg} is satisfied, for instance, when $I^{(k)} = \overline{I^k}$ for all $k \in \NN$.

\begin{corollary} \label{cor.DepthReg}
	Let $I \subseteq A$ and $J \subseteq B$ be monomial ideals. Suppose that $I^{(k)} = \overline{I^k}$ for all $k \in \NN$. Then, the inequalities in Lemma \ref{lem.boundsRegDepth} are equality.
\end{corollary}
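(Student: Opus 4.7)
The plan is to deduce Corollary \ref{cor.DepthReg} as a direct consequence of Theorem \ref{thm.DepthReg}. The entire task reduces to verifying that the hypothesis $I^{(k)} = \overline{I^k}$ for all $k \in \NN$ implies the integrality condition $\nu^*_\alpha(I) \in \ZZ$ for every $\alpha \in \NN^r$ on which Theorem \ref{thm.DepthReg} depends. Once this single point is checked, the equalities in Lemma \ref{lem.boundsRegDepth} follow immediately.

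To establish that integrality, I would reuse the short LP/IP argument already carried out in the proof of Corollary \ref{cor.BIN}. Specifically, by LP duality \cite[Proposition 1.1]{HT2019}, $\nu^*_\alpha(I) = \tau^*_\alpha(I)$, where $\tau^*_\alpha(I)$ is the optimal value of the fractional dual program (b) in the introduction. Its integer-programming counterpart has optimal value $\tau_\alpha(I) \in \ZZ$. Using the parallel membership criteria $x^\alpha \in \overline{I^k} \iff \tau^*_\alpha(I) \ge k$ and $x^\alpha \in I^{(k)} \iff \tau_\alpha(I) \ge k$, the assumption $I^{(k)} = \overline{I^k}$ for every $k$ translates into the pointwise equality $\tau^*_\alpha(I) = \tau_\alpha(I)$ for every $\alpha$. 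Hence $\nu^*_\alpha(I) = \tau_\alpha(I) \in \ZZ$, which is the required hypothesis.

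With this in hand, Theorem \ref{thm.DepthReg} applies verbatim and promotes both bounds in Lemma \ref{lem.boundsRegDepth} to equalities, which is precisely the content of the corollary. The main (and essentially only) non-formal step is the translation of the global equality $I^{(k)} = \overline{I^k}$ into the ``no LP/IP gap'' condition $\tau^*_\alpha(I) = \tau_\alpha(I)$ for all $\alpha$; once this is spelled out, everything else is an immediate appeal to the previously established theorem.
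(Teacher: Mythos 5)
Your proposal is correct and follows essentially the same route as the paper: Corollary \ref{cor.DepthReg} is deduced directly from Theorem \ref{thm.DepthReg} together with the observation, already made in the proof of Corollary \ref{cor.BIN}, that $I^{(k)}=\overline{I^k}$ for all $k$ forces $\nu^*_\alpha(I)=\tau^*_\alpha(I)=\tau_\alpha(I)\in\ZZ$ for every $\alpha$. The only step worth spelling out is that the two membership criteria a priori yield only $\lfloor \tau^*_\alpha(I)\rfloor=\tau_\alpha(I)$; one then gets the exact equality $\tau^*_\alpha(I)=\tau_\alpha(I)$ because the fractional optimum never exceeds the integral one.
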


\begin{remark} It would be of interest to know when the inequality in Theorem \ref{thm.boundsRegDepthJump} are equality. Since rational powers at jumping numbers of $I$ and $J$ do not necessarily give filtration of ideals, it is not clear how techniques of Betti splitting and Tor-vanishing would generalize to this case.
\end{remark}


\vspace{1em}

\begin{center}
	\textbf{Declarations}
\end{center}

\noindent\textbf{Ethical Approval:} Not applicable.

\noindent\textbf{Competing interests:} There are no competing interests, of either financial or personal nature.

\noindent\textbf{Authors' contributions:} The authors, A.B. and T.H.H., contribute equally in the investigation, methodology, formal analysis and writing of this article.

\noindent\textbf{Funding:} A.B. is partially supported by DST INSPIRE Faculty Fellowship and CPDA of IIT Kharagpur. T.H. acknowledges supports from Louisiana Board of Regents and the Simons Foundation.

\noindent\textbf{Availability of data and materials:} Data sharing not applicable to this article as no data-sets were generated or analyzed during the current study.




\end{document}